\providecommand{\U}[1]{\protect\rule{.1in}{.1in}}
\newtheorem{theorem}{Theorem}
\newtheorem{corollary}[theorem]{Corollary}
\newtheorem{lemma}[theorem]{Lemma}
\newtheorem{remark}[theorem]{Remark}
\newenvironment{proof}[1][Proof]{\noindent\textbf{#1.} }{\ \rule{0.5em}{0.5em}}
\begin{document}

\title{\textbf{Blowup Phenomena for Compressible Euler Equations with Non-vacuum
Initial Data}}
\author{\textsc{Sen Wong\thanks{E-mail address: senwongsenwong@yahoo.com.hk} and
}M\textsc{anwai Yuen\thanks{Corresponding Author, E-mail address:
nevetsyuen@hotmail.com }}\\\textit{Department of Mathematics and Information Technology,}\\\textit{The Hong Kong Institute of Education,}\\\textit{10 Lo Ping Road, Tai Po, New Territories, Hong Kong}}
\date{Revised 20-Apr-2015}
\maketitle

\begin{abstract}
In this article, we study the blowup phenomena of compressible Euler equations
with non-vacuum initial data. Our new results, which cover a general class of
testing functions, present new initial value blowup conditions. The
corresponding blowup results of the $1$-dimensional case in non-radial
symmetry are also included.

\ 

MSC: 35B44, 35L67, 35Q31, 35B30

\ 

Key Words: Euler Equations, Integration Method, Blowup, Radial Symmetry,
Non-vacuum, Initial Value Problems

\end{abstract}

\section{Introduction and Main Results}

$N$-dimensional compressible isentropic Euler equations for fluids can be
expressed as%
\begin{equation}
\left\{
\begin{array}
[c]{rl}%
{\normalsize \rho}_{t}{\normalsize +\nabla\cdot(\rho u)} & {\normalsize =}%
{\normalsize 0}\\
\rho\lbrack u_{t}+(u\cdot\nabla)u]{\normalsize +\nabla}P & {\normalsize =}%
0\text{,}%
\end{array}
\right.  \label{Euler}%
\end{equation}
where $\rho=\rho(t,x):[0,\infty)\times R^{N}\rightarrow\lbrack0,\infty)$,
$u=u(t,x):[0,\infty)\times R^{N}\rightarrow R^{N}$ and $P$ are the density,
the velocity, and the pressure functions respectively. For polytropic fluids,
we have
\begin{equation}
P=K\rho^{\gamma}, \label{gamma}%
\end{equation}
for which the constants $K>0$ and $\gamma\geq1$.

For non-vacuum initial data, the density remains positive for $t\geq0$. From
equation $(1)_{1}$, we know that the value of $\rho(t,x)$ is determined by
$\rho_{0}(x)$ and an exponential function along a characteristic curve. More
precisely, we have the following lemma.

\begin{lemma}
If $\rho_{0}(x)>0$ for all $x\in R^{N}$, then $\rho(t,x)>0$ for all $t\geq0$
and for all $x\in R^{N}$.
\end{lemma}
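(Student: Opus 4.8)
The plan is to integrate the mass equation $(\ref{Euler})_{1}$ along characteristics, which is exactly the ``exponential function along a characteristic curve'' alluded to before the statement. First I would rewrite the divergence term as $\nabla\cdot(\rho u)=u\cdot\nabla\rho+\rho\,\nabla\cdot u$, so that $(\ref{Euler})_{1}$ becomes the transport equation $\rho_{t}+u\cdot\nabla\rho=-\rho\,\nabla\cdot u$. Next, for each fixed $x_{0}\in R^{N}$, I would introduce the characteristic curve $t\mapsto x(t;x_{0})$ solving the ODE $\dot{x}(t)=u\bigl(t,x(t)\bigr)$ with $x(0)=x_{0}$. Assuming the solution $(\rho,u)$ is a classical ($C^{1}$) solution on the time interval under consideration — as is implicit in the blowup framework, where one reasons up to the maximal time of existence — standard ODE theory (Picard iteration) gives existence and uniqueness of $x(\cdot\,;x_{0})$, and the associated flow map $x_{0}\mapsto x(t;x_{0})$ is a diffeomorphism of $R^{N}$ onto itself.

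I would then differentiate $\rho$ along a characteristic: $\frac{d}{dt}\rho\bigl(t,x(t;x_{0})\bigr)=\rho_{t}+\dot{x}\cdot\nabla\rho=\rho_{t}+u\cdot\nabla\rho=-\rho\bigl(t,x(t;x_{0})\bigr)\,\nabla\cdot u\bigl(t,x(t;x_{0})\bigr)$. Setting $y(t):=\rho\bigl(t,x(t;x_{0})\bigr)$ and $a(t):=\nabla\cdot u\bigl(t,x(t;x_{0})\bigr)$, this is the scalar linear ODE $\dot{y}=-a(t)\,y$ with continuous coefficient $a$ on each compact time interval, whose solution is $y(t)=y(0)\exp\bigl(-\int_{0}^{t}a(s)\,ds\bigr)$; that is,
\[
\rho\bigl(t,x(t;x_{0})\bigr)=\rho_{0}(x_{0})\,\exp\left(-\int_{0}^{t}\nabla\cdot u\bigl(s,x(s;x_{0})\bigr)\,ds\right).
\]

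Finally, since $\rho_{0}(x_{0})>0$ by hypothesis and the exponential factor is strictly positive, $\rho$ is strictly positive along every characteristic; and since the flow map is onto $R^{N}$, every pair $(t,x)$ with $t\geq0$ has the form $\bigl(t,x(t;x_{0})\bigr)$ for some $x_{0}$, so $\rho(t,x)>0$ everywhere. The one point that needs care is the regularity: one needs $u$ regular enough for the characteristic ODE to be well posed and for $\int_{0}^{t}\nabla\cdot u\,ds$ to be finite. This is automatic on any compact interval on which the classical solution persists; in particular $\nabla\cdot u$ tending to $-\infty$ (focusing) only makes $\rho$ larger, and $\nabla\cdot u$ tending to $+\infty$ cannot drive $\rho$ to zero in finite time while the integral stays finite. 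Hence the conclusion holds on the whole lifespan of the solution, which is all that the subsequent blowup arguments require.
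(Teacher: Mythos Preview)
Your argument is correct and follows essentially the same approach as the paper: rewrite the continuity equation as $\frac{\mathrm{D}\rho}{\mathrm{D}t}+\rho\,\nabla\cdot u=0$ along characteristics and integrate to obtain $\rho(t,x(t;x_{0}))=\rho_{0}(x_{0})\exp\bigl(-\int_{0}^{t}\nabla\cdot u\,ds\bigr)$, from which positivity is immediate. Your version is more detailed (you spell out the surjectivity of the flow map and the regularity needed), but the underlying idea is identical.
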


\begin{proof}
With the material derivative along a characteristic curve $x(t;x_{0})$, the
mass equation $(1)_{1}$ becomes%
\begin{equation}
\frac{\mathrm{D}\rho}{\mathrm{D}t}+\rho\bigtriangledown\cdot u=0\text{.}%
\end{equation}
Taking the integration, we obtain%
\begin{equation}
\rho(t,x(t;x_{0}))=\rho_{0}(x_{0})exp\left(  -\int_{0}^{t}\bigtriangledown
\cdot u(s,x(s;x_{0}))ds\right)  \text{.}%
\end{equation}
The result follows easily from the above equation.
\end{proof}

In radial symmetry, Equations ($\ref{Euler}$) are written in the following
form%
\begin{equation}
\left\{
\begin{array}
[c]{c}%
\rho_{t}+V\rho_{r}+\rho V_{r}+\dfrac{N-1}{r}\rho V=0\\
\rho\left(  V_{t}+VV_{r}\right)  +P_{r}=0\text{.}%
\end{array}
\right.  \label{eqRS}%
\end{equation}
Here,
\begin{equation}
\rho=\rho(t,r)\text{ \ \ \ and \ \ \ }u=\frac{x}{r}V(t,r)=:\frac{x}%
{r}V\text{,}%
\end{equation}
with the radius $r=\left(  \sum_{i=1}^{N}x_{i}^{2}\right)  ^{1/2}$.

For the development of and classical results of the Euler equations and fluid
mechanics, readers may refer to $\cite{LE,CH,Lions1,E,MUK,SI,SI2,TS,Zhu,LZ}$.

In contrast to the condition given in $\cite{TS}$, where a vacuum state is
considered, we investigate the Euler equations with a non-vacuum state and the
finite propagation is applied. By refining the arguments in $\cite{SI,STW}$,
we obtain the corresponding result for $R^{N}$ using the following lemma.

\begin{lemma}
\label{tt2}Let $(\rho,u)$ be a $C^{1}$ solution of the $N$-dimensional Euler
equations $(\ref{Euler})$ with $\gamma>1$, life span $T>0$ and the following
initial data:%
\begin{equation}
\left\{
\begin{matrix}
(\rho(0,x),u(0,x))=\left(  \bar{\rho}+\rho_{0}(x),u_{0}(x)\right) \\
\text{supp}(\rho_{0},u_{0})\subseteq\{x:|x|\leq R\}\text{,}%
\end{matrix}
\right.
\end{equation}
for some positive constants $\bar{\rho}$ and $R$. Then, we have%
\begin{equation}
(\rho,u)=(\bar{\rho},0)
\end{equation}
for $t\in\lbrack0,T)$ and $|x|\geq R+\sigma t$, where $\sigma=\sqrt
{K\gamma\bar{\rho}^{\gamma-1}}>0$.
\end{lemma}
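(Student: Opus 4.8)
The plan is to exploit the finite speed of propagation for the compressible Euler system, the classical "domain of dependence" property for symmetric hyperbolic systems. The key observation is that $(\bar\rho,0)$ is itself a smooth (constant) solution of \eqref{Euler}, and our solution $(\rho,u)$ agrees with it outside the ball $\{|x|\le R\}$ at time $t=0$. Writing the Euler equations in the primitive variables $(\rho,u)$ (or, better, in terms of a symmetrizable form using the sound speed $c(\rho)=\sqrt{K\gamma\rho^{\gamma-1}}$), one checks that the system is symmetric hyperbolic in a neighborhood of the constant state $\bar\rho>0$; here the hypothesis $\gamma>1$ and $\bar\rho>0$ guarantees $c(\bar\rho)=\sigma>0$ and that the relevant eigenvalues of the flux Jacobian are $u\cdot\nu$ and $u\cdot\nu\pm c(\rho)$, all bounded near the constant state. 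I would then invoke (or reprove) the standard uniqueness/domain-of-dependence theorem: if two $C^1$ solutions of a symmetric hyperbolic system coincide on a ball $B_R(x_*)$ at $t=0$, they coincide on the backward characteristic cone with that base and maximal slope equal to the largest characteristic speed.

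Concretely, I would carry out the following steps. First I would reduce to showing that $(\rho,u)\equiv(\bar\rho,0)$ on the exterior region $E=\{(t,x): 0\le t<T,\ |x|\ge R+\sigma t\}$. Second I would localize: fix a point $(t_0,x_0)$ with $|x_0|\ge R+\sigma t_0$, and consider the truncated backward cone $\mathcal C=\{(t,x): 0\le t\le t_0,\ |x-x_0|\le |x_0|-R-\sigma(t_0-t)\}$ — wait, more cleanly, consider the forward cone emanating from the complement of $B_R$. Third, and this is the analytic heart, I would run a weighted energy estimate: let $w=(\rho-\bar\rho,u)$ denote the difference, subtract the equation satisfied by the constant state, and form $\frac{d}{dt}\int_{\Omega(t)} |w|_A^2\,dx$ where $\Omega(t)$ is the time-slice of the exterior cone and $|\cdot|_A$ is the quadratic form given by the symmetrizer $A_0(\rho)$. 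The spatial flux terms integrate to a boundary term on $\partial\Omega(t)$; choosing the cone's lateral slope to be exactly $\sigma$ (an upper bound for $|u\cdot\nu|+c(\rho)$ as long as $w$ stays small, which it does by a continuity/bootstrap argument since it starts at zero) makes this boundary term have a favorable sign, so it can be dropped. One is left with $\frac{d}{dt}\int_{\Omega(t)}|w|_A^2\,dx \le C\int_{\Omega(t)}|w|_A^2\,dx$, with $C$ controlled by the $C^1$ norm of the solution on the compact cone, and since the initial energy vanishes, Gronwall forces $w\equiv0$ on the whole cone.

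I expect the main obstacle to be handling the lateral boundary term in the energy identity correctly and making the bootstrap rigorous. Two subtleties arise: (i) one must verify that the symmetrized flux, contracted against the outward conormal $(\,-\sigma,\nu\,)$ of the lateral boundary, is a nonnegative (or nonpositive, depending on orientation) quadratic form in $w$ whenever the cone slope $\sigma$ dominates all characteristic speeds — this is exactly the algebraic condition $\sigma \ge |u\cdot\nu| + c(\rho)$, which in turn needs $\rho$ close to $\bar\rho$ and $|u|$ small on the cone; (ii) closing the argument therefore requires a continuity argument: on the set where $|w|$ is small the slope bound holds, the energy estimate gives $w\equiv0$ there, and this set is both open and closed in the cone, hence all of it. The compact support hypothesis on $(\rho_0,u_0)$ and the finiteness of $T$ with $(\rho,u)\in C^1$ are precisely what make the coefficients and the cone bounded, so all constants in Gronwall are finite. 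An alternative, slicker route avoiding the symmetrizer is to argue directly along characteristics using the Riemann-invariant structure localized near the constant state, but the energy method is cleaner to state in $R^N$ for $N>1$, so that is the route I would write up.
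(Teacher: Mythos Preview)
Your strategy---energy estimate on a backward cone, sign of the lateral boundary flux, Gronwall---is exactly the paper's, but the execution differs in one important respect. You propose to work with $w=(\rho-\bar\rho,u)$ and a state-dependent symmetrizer $A_0(\rho)$; the paper instead introduces the sound-speed perturbation
\[
v=\frac{2}{\gamma-1}\bigl(\sqrt{P'(\rho)}-\sigma\bigr),
\]
rewrites the system as $v_t+\sigma\,\nabla\!\cdot u=Q_1$, $u_t+\sigma\,\nabla v=Q_2$ with $Q_1,Q_2$ quadratic in $(v,u)$ (though containing first derivatives), and runs the energy $e(s)=\tfrac12\int_{U(s)}(v^2+|u|^2)\,dy$. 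The point of this change of variable is that the \emph{linear} principal part now has \emph{constant} characteristic speed $\sigma$, so on the lateral boundary of a cone of slope $\sigma$ the flux contribution is $\sigma\bigl(\tfrac{v^2+|u|^2}{2}+vu\!\cdot\!\nu\bigr)\ge0$ by Cauchy--Schwarz, valid for \emph{all} values of $(v,u)$ with no smallness hypothesis whatsoever. The quadratic remainders $Q_i$ are then absorbed into the Gronwall constant using only the $C^1$ bound on the solution. This completely eliminates the bootstrap that you correctly identify as ``the main obstacle.''

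Your bootstrap, as written, is not quite closed: the sign condition you need on the lateral boundary is $\sigma\ge|u\!\cdot\!\nu|+c(\rho)$, but at the constant state this is an \emph{equality}, $\sigma=0+c(\bar\rho)$, so any perturbation with $\rho>\bar\rho$ or $u\ne0$ violates it strictly. The open--closed argument therefore does not deliver slope exactly $\sigma$; you would have to run it with slope $\sigma+\varepsilon$ for each $\varepsilon>0$ (where the continuity argument is genuinely robust), conclude $(\rho,u)=(\bar\rho,0)$ for $|x|\ge R+(\sigma+\varepsilon)t$, and then let $\varepsilon\downarrow0$. With that amendment your route is correct, just longer; the paper's change of variable is precisely what buys the sharp cone slope $\sigma$ in a single pass.
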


\begin{proof}
The proof is included in the Appendix.
\end{proof}

The following corollary is the radial symmetry version of Lemma $\ref{tt2}$.

\begin{corollary}
\label{c4}Let $(\rho,V)$ be a $C^{1}$ solution of the $N$-dimensional Euler
equations in radial symmetry ($\ref{eqRS}$) with $\gamma>1$, life span $T>0$
and the following initial data%
\begin{equation}
\left\{
\begin{matrix}
(\rho(0,r),V(0,r))=(\bar{\rho}+\rho_{0}(r),V_{0}(r))\label{initial}\\
\text{supp}(\rho_{0},V_{0})\subseteq\{r:r\leq R\}\text{,}%
\end{matrix}
\right.
\end{equation}
for some positive constants $\bar{\rho}$ and $R$. Then, we have%
\begin{equation}
(\rho,V)=(\bar{\rho},0)\text{,}%
\end{equation}
for $t\in[0,T)$ and $r\geq R+\sigma t$, where $\sigma=\sqrt{K\gamma\bar{\rho
}^{\gamma-1}}>0$.
\end{corollary}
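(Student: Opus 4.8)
The plan is to derive Corollary \ref{c4} from Lemma \ref{tt2} by viewing a radially symmetric solution of the reduced system (\ref{eqRS}) as an honest $C^{1}$ solution of the full $N$-dimensional system (\ref{Euler}). Given the $C^{1}$ radial pair $(\rho(t,r),V(t,r))$ solving (\ref{eqRS}) with life span $T$, I would define $\rho(t,x):=\rho(t,|x|)$ and $u(t,x):=\frac{x}{|x|}\,V(t,|x|)$. Using $\nabla r=x/r$, $\nabla\cdot(x/r)=(N-1)/r$ and the chain rule, a direct computation shows that $(\rho,u)$ solves (\ref{Euler}) away from the origin --- this is precisely the passage from (\ref{Euler}) to (\ref{eqRS}) read in reverse. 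Moreover $(\rho(0,x),u(0,x))=(\bar{\rho}+\rho_{0}(|x|),\frac{x}{|x|}V_{0}(|x|))$, and since $\text{supp}(\rho_{0},V_{0})\subseteq\{r\le R\}$ the perturbation of $(\rho,u)$ from $(\bar{\rho},0)$ is supported in $\{|x|\le R\}$, so all hypotheses of Lemma \ref{tt2} are in place.

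Applying Lemma \ref{tt2} then yields $(\rho,u)=(\bar{\rho},0)$ for $t\in[0,T)$ and $|x|\ge R+\sigma t$ with $\sigma=\sqrt{K\gamma\bar{\rho}^{\gamma-1}}$. Evaluating this along a fixed ray through the origin and using $|u(t,x)|=|V(t,|x|)|$ gives $\rho(t,r)=\bar{\rho}$ and $V(t,r)=0$ for $t\in[0,T)$ and $r\ge R+\sigma t$, which is exactly the claimed conclusion.

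The main obstacle is the regularity of the lift at $x=0$: for $(\rho,u)$ to qualify as a $C^{1}$ solution on all of $R^{N}$, as Lemma \ref{tt2} requires, $u$ must extend continuously (indeed $C^{1}$) across the origin, which forces $V(t,0)=0$. I would dispose of this in one of two ways. Either invoke the fact that a genuine $C^{1}$ radial flow must satisfy $V(t,0)=0$ (the velocity field cannot have a direction-dependent limit at $x=0$ unless its radial profile vanishes there), so the lifted $(\rho,u)$ is legitimately $C^{1}$; or observe that the finite-propagation argument underlying Lemma \ref{tt2} (carried out in the Appendix) is entirely local to the exterior region $\{|x|\ge R+\sigma t\}$ together with its backward characteristic cones, which never reach the origin, so the argument transfers verbatim to the lifted solution restricted to that region even without worrying about $x=0$. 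Once this point is settled, the remaining work is just the routine chain-rule verification described above.
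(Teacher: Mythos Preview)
Your proposal is correct and matches the paper's intent: the paper states Corollary~\ref{c4} immediately after Lemma~\ref{tt2} with the single remark that it is ``the radial symmetry version of Lemma~\ref{tt2}'' and gives no separate proof, so the implicit argument is precisely the lift-and-apply step you describe. Your discussion of the regularity issue at $x=0$ is a welcome addition that the paper leaves unaddressed; the second resolution you offer (that the energy/cone argument in the Appendix is local to the exterior region and never touches the origin) is the cleanest way to close that gap.
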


In 2011, Yuen obtained the initial functional conditions for the blowup of the
Euler-Poisson equations for testing functions $f(r)=r^{n}$ (with $n=1$ in
$\cite{Yuen1}$ and an arbitrary positive constant $n$ in $\cite{Yuen2}$).
Subsequently, the authors in $\cite{Gf}$ designed general testing functions to
obtain the initial functional conditions for showing the blowup phenomena of
the Euler and Euler-Poisson equations using the integration method under the
nonslip boundary condition $\cite{Ns}$. Recently, the authors in $\cite{LMZ}$
obtained improved blowup results for the Euler and Euler-Poisson equations
with repulsive forces based on $\cite{Yuen1}$. To apply the integration
method, controlling of the support of the data is required. With the
assistance of Corollary $\ref{c4}$, we can use the integration method to study
the blowup phenomena of the Euler equations in which the nonslip boundary
condition is replaced by an initial value condition, and thus obtain new
blowup results. More precisely, we have the following theorems.

\begin{theorem}
\label{tt6}Fix $a>2$ and $\tau>0$. Let $f(r)$ be a strictly increasing $C^{1}$
function that vanishes at $0$. Under the setting of Corollary $\ref{c4}$, if
$H_{1}(0)$ is large enough such that%
\begin{equation}
\frac{(a-2)H_{1}^{2}(0)}{2aB_{1}(\tau)}-\frac{K\gamma}{\gamma-1}\bar{\rho
}^{\gamma-1}f(R+\sigma\tau)>0 \label{9}%
\end{equation}
and%
\begin{equation}
H_{1}(0)\geq\left[  \int_{0}^{\tau}\frac{1}{aB_{1}(s)}ds\right]  ^{-1}\text{,}
\label{10}%
\end{equation}
where%
\begin{equation}
H_{1}(t)=\int_{0}^{\infty}f(r)V(t,r)dr
\end{equation}
and%
\begin{equation}
B_{1}(t)=\int_{0}^{R+\sigma t}\frac{f^{2}(r)}{{f}^{\prime}(r)}dr\text{,}%
\end{equation}
then, the time $T<\tau$.
\end{theorem}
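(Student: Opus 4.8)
The plan is to run the integration method on the radial momentum equation, using Corollary~\ref{c4} to localise every integral to the compact region $0\le r\le R+\sigma t$, and then to close a Riccati-type differential inequality for $H_{1}(t)$.

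First I would argue by contradiction, assuming $T\ge\tau$ so that the $C^{1}$ solution is available on $[0,\tau)$. Since $V(t,\cdot)$ is $C^{1}$ and, by Corollary~\ref{c4}, vanishes for $r\ge R+\sigma t$, the quantity $H_{1}$ is differentiable with $H_{1}'(t)=\int_{0}^{\infty}f(r)V_{t}(t,r)\,dr$. I would substitute $V_{t}=-VV_{r}-P_{r}/\rho$ from $(\ref{eqRS})_{2}$ (valid since $\rho>0$), rewrite $P_{r}/\rho=\frac{K\gamma}{\gamma-1}(\rho^{\gamma-1})_{r}$ (this is where $\gamma>1$ enters), and integrate by parts in $r$, using $f(0)=0$ to cancel the boundary terms at the origin and $(\rho,V)\equiv(\bar{\rho},0)$ for $r\ge R+\sigma t$ to both localise the pressure integral to $[0,R+\sigma t]$ and cancel the boundary terms at infinity. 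This produces
\begin{equation}
H_{1}'(t)=\frac{1}{2}\int_{0}^{\infty}f'(r)V^{2}\,dr-\frac{K\gamma}{\gamma-1}\bar{\rho}^{\gamma-1}f(R+\sigma t)+\frac{K\gamma}{\gamma-1}\int_{0}^{R+\sigma t}f'(r)\rho^{\gamma-1}\,dr,
\end{equation}
and discarding the last term (it is nonnegative since $f'>0$ and $\rho\ge0$) gives $H_{1}'(t)\ge\frac{1}{2}\int_{0}^{\infty}f'(r)V^{2}\,dr-\frac{K\gamma}{\gamma-1}\bar{\rho}^{\gamma-1}f(R+\sigma t)$.

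Next I would apply the Cauchy--Schwarz inequality to the factorisation $fV=\frac{f}{\sqrt{f'}}\cdot\sqrt{f'}\,V$ on $[0,R+\sigma t]$, which yields $H_{1}^{2}(t)\le B_{1}(t)\int_{0}^{\infty}f'(r)V^{2}\,dr$ and hence the closed inequality $H_{1}'(t)\ge\frac{H_{1}^{2}(t)}{2B_{1}(t)}-\frac{K\gamma}{\gamma-1}\bar{\rho}^{\gamma-1}f(R+\sigma t)$. Writing $\frac{1}{2B_{1}}=\frac{1}{aB_{1}}+\frac{a-2}{2aB_{1}}$ and using that both $B_{1}$ and $f$ are increasing in $t$ on $[0,\tau]$, I would then run a bootstrap argument: by $(\ref{9})$ the remainder $\frac{(a-2)H_{1}^{2}(t)}{2aB_{1}(t)}-\frac{K\gamma}{\gamma-1}\bar{\rho}^{\gamma-1}f(R+\sigma t)$ is strictly positive as soon as $H_{1}(t)\ge H_{1}(0)$, so $H_{1}'(t)>\frac{H_{1}^{2}(t)}{aB_{1}(t)}>0$ there, and a continuity argument propagates $H_{1}(t)\ge H_{1}(0)$ throughout $[0,\tau)$.

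Finally, from $H_{1}'>H_{1}^{2}/(aB_{1})$ I would integrate $-\frac{d}{dt}H_{1}^{-1}>\frac{1}{aB_{1}(t)}$ over $[0,t]$ to obtain $H_{1}^{-1}(t)<H_{1}^{-1}(0)-\int_{0}^{t}\frac{ds}{aB_{1}(s)}$; by $(\ref{10})$ the right-hand side is $\le0$ as $t\uparrow\tau$, which forces $H_{1}$ to escape to $+\infty$ at some time $<\tau$, contradicting $T\ge\tau$ (note $H_{1}(t)$ stays finite for $t<T$ because $V$ is $C^{1}$ with support in $r\le R+\sigma t$). Hence $T<\tau$. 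I expect the one genuinely delicate point to be the control of the boundary contributions at infinity in the integration by parts: since the state is non-vacuum, $\rho\to\bar{\rho}\neq0$ and $f(r)\rho^{\gamma-1}$ does not decay at infinity, so it is exactly the finite-propagation statement of Corollary~\ref{c4} that legitimises the localisation on which the whole estimate rests.
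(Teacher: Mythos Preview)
Your proposal is correct and follows essentially the same route as the paper: multiply the radial momentum equation by $f$, integrate by parts using $f(0)=0$ and Corollary~\ref{c4} to kill the boundary terms, drop the nonnegative $\int f'\rho^{\gamma-1}$ contribution, apply Cauchy--Schwarz with the splitting $fV=(f/\sqrt{f'})\cdot\sqrt{f'}\,V$, and then run the Riccati comparison via the decomposition $\tfrac{1}{2}=\tfrac{1}{a}+\tfrac{a-2}{2a}$. The only cosmetic difference is that the paper packages the bootstrap step by introducing an auxiliary function $G_{1}(t)=\tfrac{(a-2)H_{1}^{2}(t)}{2aB_{1}(\tau)}-\tfrac{K\gamma}{\gamma-1}\bar{\rho}^{\gamma-1}f(R+\sigma\tau)$ and arguing by contradiction that $G_{1}\ge0$ on $[0,\tau]$, whereas you phrase the same thing as a direct continuity/monotonicity argument propagating $H_{1}(t)\ge H_{1}(0)$; the content is identical.
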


\begin{theorem}
\label{tt7}Fix $a>2$ and $\tau>0$. Let $f(x)$ be a non-negative strictly
increasing $C^{1}$ function. Under the setting of Lemma $\ref{tt2}$ with
$N=1$, if $H_{2}(0)$ is large enough such that%
\begin{equation}
\frac{(a-2)H_{2}^{2}(0)}{2aB_{2}(\tau)}-\frac{K\gamma}{\gamma-1}\bar{\rho
}^{\gamma-1}f(R+\sigma\tau)>0
\end{equation}
and%
\begin{equation}
H_{2}(0)\geq\left[  \int_{0}^{\tau}\frac{1}{aB_{2}(s)}ds\right]  ^{-1}\text{,}
\label{14}%
\end{equation}
then the time $T<\tau$, where%
\begin{equation}
H_{2}(t)=\int_{-\infty}^{+\infty}f(x)u(t,x)dx
\end{equation}
and%
\begin{equation}
B_{2}(t)=\int_{-R-\sigma t}^{R+\sigma t}\frac{f^{2}(x)}{{f}^{\prime}%
(x)}dx\text{.}%
\end{equation}

\end{theorem}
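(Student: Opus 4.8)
\noindent The plan is to run the integration (functional) method on the weighted velocity $H_2(t)=\int_{-\infty}^{+\infty}f(x)\,u(t,x)\,dx$. Since $N=1$ and the perturbation $(\rho_0,u_0)$ has support in $\{|x|\le R\}$, Lemma~\ref{tt2} guarantees that for every $t\in[0,T)$ we have $u(t,x)=0$ and $\rho(t,x)=\bar\rho$ whenever $|x|\ge R+\sigma t$. Hence each spatial integral below is in fact taken over the bounded interval $[-(R+\sigma t),\,R+\sigma t]$, which is exactly what makes the integration-by-parts boundary terms vanish and makes $B_2(t)$ finite and nondecreasing in $t$. The data being non-vacuum, $\rho>0$ by Lemma~1, so the momentum equation $(\ref{Euler})_{2}$ with $P=K\rho^{\gamma}$ may be divided by $\rho$ and rewritten as
\begin{equation}
u_t+uu_x=-\frac{K\gamma}{\gamma-1}\big(\rho^{\gamma-1}\big)_x.
\end{equation}

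First I would differentiate $H_2$ in time. Because $(\rho,u)\in C^{1}$ and the integrand has compact $x$-support, the moving-boundary Leibniz rule contributes no boundary term, so $H_2'(t)=\int f(x)\,u_t\,dx$. Inserting the equation above and integrating by parts in each term --- using $-\int f\,uu_x\,dx=\tfrac12\int f'u^{2}\,dx$, and in the pressure term first replacing $(\rho^{\gamma-1})_x$ by $(\rho^{\gamma-1}-\bar\rho^{\gamma-1})_x$ so that the differentiated quantity is compactly supported --- gives
\begin{equation}
H_2'(t)=\frac12\int f'(x)\,u^{2}\,dx+\frac{K\gamma}{\gamma-1}\int f'(x)\big(\rho^{\gamma-1}-\bar\rho^{\gamma-1}\big)\,dx.
\end{equation}
Since $\rho^{\gamma-1}>0$ and $f$ is non-negative and nondecreasing, the last integral is at least $-\bar\rho^{\gamma-1}\int_{-R-\sigma t}^{R+\sigma t}f'\,dx=-\bar\rho^{\gamma-1}\big(f(R+\sigma t)-f(-R-\sigma t)\big)\ge-\bar\rho^{\gamma-1}f(R+\sigma t)$, and therefore
\begin{equation}
H_2'(t)\ge\frac12\int f'(x)\,u^{2}\,dx-\frac{K\gamma}{\gamma-1}\bar\rho^{\gamma-1}f(R+\sigma t).
\end{equation}

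Next I would invoke Cauchy--Schwarz on $[-(R+\sigma t),R+\sigma t]$: writing $fu=(f/\sqrt{f'}\,)(\sqrt{f'}\,u)$ yields $H_2(t)^{2}\le B_2(t)\int f'u^{2}\,dx$, i.e. $\int f'u^{2}\,dx\ge H_2(t)^{2}/B_2(t)$. Splitting $\tfrac12=\tfrac{a-2}{2a}+\tfrac1a$ (legitimate since $a>2$) then produces the differential inequality
\begin{equation}
H_2'(t)\ge\left[\frac{(a-2)H_2(t)^{2}}{2aB_2(t)}-\frac{K\gamma}{\gamma-1}\bar\rho^{\gamma-1}f(R+\sigma t)\right]+\frac{H_2(t)^{2}}{aB_2(t)}.
\end{equation}
A continuity argument now takes over. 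The first hypothesis of the theorem states that $\dfrac{(a-2)H_2(0)^{2}}{2aB_2(\tau)}>\dfrac{K\gamma}{\gamma-1}\bar\rho^{\gamma-1}f(R+\sigma\tau)$, and $(\ref{14})$ in particular forces $H_2(0)>0$. Since $B_2$ is nondecreasing and $f$ increasing, whenever $H_2(t)\ge H_2(0)$ and $0\le t\le\tau$ the bracket above is bounded below by $\dfrac{(a-2)H_2(0)^{2}}{2aB_2(\tau)}-\dfrac{K\gamma}{\gamma-1}\bar\rho^{\gamma-1}f(R+\sigma\tau)>0$; hence $H_2'(t)>0$ there, and the bound $H_2(t)\ge H_2(0)$ propagates for as long as the solution exists. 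Consequently, on $[0,\min(T,\tau))$ we may discard the positive bracket and keep the Riccati inequality $H_2'(t)\ge H_2(t)^{2}/(aB_2(t))$.

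Finally, dividing by $H_2^{2}$ and integrating from $0$ to $t$ gives $\dfrac1{H_2(0)}-\dfrac1{H_2(t)}\ge\int_{0}^{t}\dfrac{ds}{aB_2(s)}$, that is, $\dfrac1{H_2(t)}\le\dfrac1{H_2(0)}-\int_{0}^{t}\dfrac{ds}{aB_2(s)}$. If the $C^{1}$ solution persisted up to $\tau$, then letting $t\uparrow\tau$ and invoking $(\ref{14})$ would make the right-hand side $\le0$, contradicting $H_2(t)\ge H_2(0)>0$; the strict sign in the first hypothesis --- which in reality leaves a fixed positive constant in the differential inequality alongside $H_2^{2}/(aB_2)$ --- removes the borderline equality case of $(\ref{14})$, so that $T<\tau$. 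The step I expect to be the main obstacle is the control of the pressure term; it is also the one genuinely new feature here compared with the vacuum treatment of $\cite{TS}$, because $\rho^{\gamma-1}$ no longer decays at spatial infinity, and it is precisely the finite-propagation statement of Lemma~\ref{tt2} that allows one to subtract off the constant $\bar\rho^{\gamma-1}$, localize the pressure integral to $[-(R+\sigma t),R+\sigma t]$, and bound it below by the harmless quantity $-\tfrac{K\gamma}{\gamma-1}\bar\rho^{\gamma-1}f(R+\sigma t)$. Once this is done, the rest is the by-now-standard functional/Riccati machinery, parallel to the proof of Theorem~\ref{tt6}.
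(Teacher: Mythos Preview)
Your proposal is correct and follows essentially the same route as the paper's proof: rewrite the $1$D momentum equation as $u_t+\tfrac12(u^2)_x+\tfrac{K\gamma}{\gamma-1}(\rho^{\gamma-1}-\bar\rho^{\gamma-1})_x=0$, multiply by $f$, integrate by parts using the finite-propagation Lemma~\ref{tt2}, discard the positive $\rho^{\gamma-1}f'$ term, apply Cauchy--Schwarz to get the $H_2^2/B_2$ lower bound, split $\tfrac12=\tfrac1a+\tfrac{a-2}{2a}$, run a continuity/bootstrap argument to keep the bracketed term nonnegative on $[0,\tau]$, and finish with the Riccati comparison. Your bootstrap (``$H_2(t)\ge H_2(0)$ propagates'') is logically the same as the paper's ``$G_2(0)>0\Rightarrow G_2(t)\ge0$'' contradiction argument spelled out in the proof of Theorem~\ref{tt6}; your remark that the strict first hypothesis handles the borderline equality case of~(\ref{14}) is a small sharpening of what the paper leaves implicit.
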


Other blowup results for the compressible Euler equations are provided in
Section 2.

\section{Integration Methods}

First, we give a detailed proof of Theorem $\ref{tt6}$ using the integration
method for $\gamma>1$ as follows.

\begin{proof}
[Proof of Theorem $\ref{tt6}$]Equation ($\ref{eqRS}$)$_{2}$, for non-vacuum
initial data, becomes%
\begin{equation}
V_{t}+\partial_{r}(\frac{1}{2}V^{2})+\frac{K\gamma}{\gamma-1}\partial_{r}%
(\rho^{\gamma-1}-\bar{\rho}^{\gamma-1})=0\text{.} \label{eq19}%
\end{equation}
Multiplying equation ($\ref{eq19}$) by function $f(r)$ and taking the
integration over $[0,\infty)$, we get%
\begin{equation}
\dot{H_{1}}(t)+\int_{0}^{\infty}f(r)\partial_{r}(\frac{1}{2}V^{2}%
)dr+\frac{K\gamma}{\gamma-1}\int_{0}^{\infty}f(r)\partial_{r}(\rho^{\gamma
-1}-\bar{\rho}^{\gamma-1})dr=0\text{.} \label{keyeq}%
\end{equation}
Note that the integrals are well defined.\newline Using the integration by
parts, we get%
\begin{equation}
\dot{H_{1}}(t)+\int_{0}^{R+\sigma t}f(r)\partial_{r}(\frac{1}{2}V^{2}%
)dr+\frac{K\gamma}{\gamma-1}\int_{0}^{R+\sigma t}f(r)\partial_{r}(\rho
^{\gamma-1}-\bar{\rho}^{\gamma-1})dr=0
\end{equation}%
\begin{align}
&  \dot{H_{1}}(t)+\frac{1}{2}\left[  V^{2}(t,r)f(r)\right]  _{0}^{R+\sigma
t}+\frac{K\gamma}{\gamma-1}\left[  (\rho^{\gamma-1}-\bar{\rho}^{\gamma
-1})f(r)\right]  _{0}^{R+\sigma t}\nonumber\\
&  =\frac{1}{2}\int_{0}^{R+\sigma t}V^{2}{f}^{\prime}(r)dr+\frac{K\gamma
}{\gamma-1}\int_{0}^{R+\sigma t}(\rho^{\gamma-1}-\bar{\rho}^{\gamma-1}%
){f}^{\prime}(r)dr
\end{align}%
\begin{equation}
\dot{H_{1}}(t)=\frac{1}{2}\int_{0}^{R+\sigma t}V^{2}{f}^{\prime}%
(r)dr+\frac{K\gamma}{\gamma-1}\int_{0}^{R+\sigma t}(\rho^{\gamma-1}-\bar{\rho
}^{\gamma-1}){f}^{\prime}(r)dr \label{28}%
\end{equation}%
\begin{equation}
\geq\frac{1}{2}\int_{0}^{R+\sigma t}V^{2}{f}^{\prime}(r)dr-\frac{K\gamma
}{\gamma-1}\int_{0}^{R+\sigma t}\bar{\rho}^{\gamma-1}{f}^{\prime}(r)dr
\end{equation}%
\begin{equation}
=\frac{1}{2}\int_{0}^{R+\sigma t}V^{2}{f}^{\prime}(r)dr-\frac{K\gamma}%
{\gamma-1}\bar{\rho}^{\gamma-1}{f}(R+\sigma t)\text{.}%
\end{equation}
That is,%
\begin{equation}
\dot{H_{1}}(t)\geq\frac{1}{2}\int_{0}^{R+\sigma t}V^{2}{f}^{\prime}%
(r)dr-\frac{K\gamma}{\gamma-1}\bar{\rho}^{\gamma-1}f(R+\sigma t)\text{.}%
\end{equation}
\newline On the other hand, by the Cauchy Inequality,%
\begin{equation}
\left[  \int_{0}^{R+\sigma t}Vf(r)dr\right]  ^{2}\leq\int_{0}^{R+\sigma
t}V^{2}{f}^{\prime}(r)dr\int_{0}^{R+\sigma t}\frac{f^{2}(r)}{{f}^{\prime}%
(r)}dr
\end{equation}%
\begin{equation}
\int_{0}^{R+\sigma t}V^{2}{f}^{\prime}(r)dr\geq\frac{H_{1}^{2}(t)}{B_{1}%
(t)}\text{.}%
\end{equation}
Hence,%
\begin{equation}
\dot{H_{1}}(t)\geq\frac{H_{1}^{2}(t)}{2B_{1}(t)}-\frac{K\gamma}{\gamma-1}%
\bar{\rho}^{\gamma-1}f(R+\sigma t)\text{.}%
\end{equation}
When $0\leq t\leq\tau$, we have%
\begin{equation}
\dot{H_{1}}(t)\geq\frac{H_{1}^{2}(t)}{aB_{1}(t)}+\left[  \frac{(a-2)H_{1}%
^{2}(t)}{2aB_{1}(t)}-\frac{K\gamma}{\gamma-1}\bar{\rho}^{\gamma-1}f(R+\sigma
t)\right]
\end{equation}%
\begin{equation}
\geq\frac{H_{1}^{2}(t)}{aB_{1}(t)}+\left[  \frac{(a-2)H_{1}^{2}(t)}%
{2aB_{1}(\tau)}-\frac{K\gamma}{\gamma-1}\bar{\rho}^{\gamma-1}f(R+\sigma
\tau)\right]
\end{equation}%
\begin{equation}
=:\frac{H_{1}^{2}(t)}{aB_{1}(t)}+G_{1}(t)\text{.}%
\end{equation}
From condition ($\ref{9}$), we have $G_{1}(0)>0$. It follows that
$G_{1}(t)\geq0$ for $0\leq t\leq\tau$. More precisely, suppose $G_{1}%
(t_{1})<0$, for some $0<t_{1}\leq\tau$, then there exists a constant $t_{2}$,
where $0<t_{2}<t_{1}$, such that%
\begin{equation}
\left\{
\begin{matrix}
G_{1}(t)>0\text{,} & 0\leq t<t_{2}\\
G_{1}(t)=0\text{,} & t=t_{2}\\
G_{1}(t)<0\text{,} & t_{2}<t<t_{2}+\varepsilon_{1}\text{,}%
\end{matrix}
\right.
\end{equation}
for some $\varepsilon_{1}>0$.\newline Thus, $\dot{H_{1}}(t_{2})\geq0$ implies
$H_{1}(t_{2}+\varepsilon_{2})\geq H_{1}(t_{2})>0$, for some $0<\varepsilon
_{2}<\varepsilon_{1}$. Thus, $G_{1}(t_{2}+\varepsilon_{2})\geq G_{1}(t_{2}%
)=0$, which is a contradiction.\newline Therefore, for $0\leq t\leq\tau$, we
have%
\begin{equation}
H_{1}(t)\geq H_{1}(0)>0
\end{equation}
and%
\begin{equation}
\dot{H_{1}}(t)\geq\frac{H_{1}^{2}(t)}{aB_{1}(t)}\text{.}%
\end{equation}
It follows that for $0\leq t\leq\tau$, we obtain%
\begin{equation}
\frac{1}{H_{1}(0)}-\frac{1}{H_{1}(t)}\geq\int_{0}^{t}\frac{1}{aB_{1}%
(s)}ds\text{.}%
\end{equation}
Thus,%
\begin{equation}
0<\frac{1}{H_{1}(t)}\leq\frac{1}{H_{1}(0)}-\int_{0}^{t}\frac{1}{aB_{1}%
(s)}ds\text{.}%
\end{equation}
From condition ($\ref{10}$), we conclude that the non-vacuum solutions for the
Euler equations ($\ref{eqRS}$) blow up before $\tau$, that is, the time
$T<\tau$.\newline The proof is complete.
\end{proof}

Second, the proof of Theorem $\ref{tt7}$ for the corresponding $1$-dimensional
case in non-radial symmetry is presented.

\begin{proof}
[Proof of Theorem $\ref{tt7}$]The $1$-dimensional momentum equation
($\ref{Euler}$)$_{2}$ with non-vacuum data is written as%
\begin{equation}
u_{t}+uu_{x}+K\gamma\rho^{\gamma-2}\rho_{x}=0\text{,}%
\end{equation}%
\begin{equation}
u_{t}+\frac{1}{2}\partial_{x}(u^{2})+\frac{K\gamma}{\gamma-1}\partial_{x}%
(\rho^{\gamma-1}-\bar{\rho}^{\gamma-1})=0\text{.}%
\end{equation}
As before, we multiply the above equation by function $f(x)$ on both sides and
take the integration with respect to $x$, yielding%
\begin{equation}
\int_{-\infty}^{+\infty}f(x)u_{t}dx+\frac{1}{2}\int_{-\infty}^{+\infty
}f(x)\partial_{x}(u^{2})+\frac{K\gamma}{\gamma-1}\int_{-\infty}^{+\infty
}f(x)\partial_{x}(\rho^{\gamma-1}-\bar{\rho}^{\gamma-1})=0\text{.}%
\end{equation}
By using the integration by parts, we obtain%
\begin{align}
&  \dot{H_{2}}(t)+\frac{1}{2}\left[  f(x)u^{2}\right]  _{-R-\sigma
t}^{R+\sigma t}+\frac{K\gamma}{\gamma-1}\left[  f(x)(\rho^{\gamma-1}-\bar
{\rho}^{\gamma-1})\right]  _{-R-\sigma t}^{R+\sigma t}\nonumber\\
&  =\frac{1}{2}\int_{-R-\sigma t}^{R+\sigma t}u^{2}{f}^{\prime}(x)dx+\frac
{K\gamma}{\gamma-1}\int_{-R-\sigma t}^{R+\sigma t}(\rho^{\gamma-1}-\bar{\rho
}^{\gamma-1}){f}^{\prime}(x)dx\text{.}%
\end{align}
Hence,%
\begin{equation}
\dot{H_{2}}(t)=\frac{1}{2}\int_{-R-\sigma t}^{R+\sigma t}u^{2}{f}^{\prime
}(x)dx+\frac{K\gamma}{\gamma-1}\int_{-R-\sigma t}^{R+\sigma t}(\rho^{\gamma
-1}-\bar{\rho}^{\gamma-1}){f}^{\prime}(x)dx \label{48}%
\end{equation}%
\begin{equation}
\geq\frac{1}{2}\int_{-R-\sigma t}^{R+\sigma t}u^{2}{f}^{\prime}(x)dx-\frac
{K\gamma}{\gamma-1}\bar{\rho}^{\gamma-1}f(R+\sigma t)\text{.}%
\end{equation}
\newline On the other hand,%
\begin{equation}
\left[  \int_{-R-\sigma t}^{R+\sigma t}uf(x)dx\right]  ^{2}\leq\left(
\int_{-R-\sigma t}^{R+\sigma t}u^{2}{f}^{\prime}(x)dx\right)  \left(
\int_{-R-\sigma t}^{R+\sigma t}\frac{f^{2}(x)}{{f}^{\prime}(x)}dx\right)
\text{.}%
\end{equation}
Then,%
\begin{equation}
\int_{-R-\sigma t}^{R+\sigma t}u^{2}{f}^{\prime}(x)dx\geq\frac{H_{2}^{2}%
(t)}{B_{2}(t)}\text{.}%
\end{equation}
Thus, we have%
\begin{equation}
\dot{H_{2}}(t)\geq\frac{H_{2}^{2}(t)}{2B_{2}(t)}-\frac{K\gamma}{\gamma-1}%
\bar{\rho}^{\gamma-1}f(R+\sigma t)\text{.}%
\end{equation}
For $0\leq t\leq\tau$, we obtain%
\begin{equation}
\dot{H_{2}}(t)\geq\frac{H_{2}^{2}(t)}{aB_{2}(t)}+\left[  \frac{(a-2)H_{2}%
^{2}(t)}{2aB_{2}(t)}-\frac{K\gamma}{\gamma-1}\bar{\rho}^{\gamma-1}f(R+\sigma
t)\right]
\end{equation}%
\begin{equation}
\geq\frac{H_{2}^{2}(t)}{aB_{2}(t)}+\left[  \frac{(a-2)H_{2}^{2}(t)}%
{2aB_{2}(\tau)}-\frac{K\gamma}{\gamma-1}\bar{\rho}^{\gamma-1}f(R+\sigma
\tau)\right]
\end{equation}%
\begin{equation}
=:\frac{H_{2}^{2}(t)}{aB_{2}(t)}+G_{2}(t)\text{.}%
\end{equation}
As before, from $G_{2}(0)>0$, we have $G_{2}(t)\geq0$ for $0\leq t\leq\tau$.
Therefore,%
\begin{equation}
\dot{H_{2}}(t)\geq\frac{H_{2}^{2}(t)}{aB_{2}(t)}\text{.}%
\end{equation}
It follows that the time $T<\tau$ if condition ($\ref{14}$) is satisfied.

The proof is complete.
\end{proof}

To give the proofs of Theorems $\ref{t7}$ and $\ref{t9}$, we need the
following lemma.

\begin{lemma}
Define $m_{1}(t)=\int_{0}^{\infty}(\rho-\bar{\rho})r^{N-1}dr$. Then we have
${m_{1}}^{\prime}(t)=0$ for $N\geq1$. In other words, $m_{1}(t)=m_{1}%
(0)$.\label{lemmaofmt}
\end{lemma}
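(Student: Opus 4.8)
The plan is to put the radial continuity equation $(\ref{eqRS})_{1}$ into conservation (divergence) form, integrate it in $r$ over $[0,\infty)$, and dispose of the resulting boundary terms using the finite-propagation property of Corollary $\ref{c4}$.

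First I would multiply $(\ref{eqRS})_{1}$ by $r^{N-1}$ and use the identity
\begin{equation}
r^{N-1}\big(V\rho_{r}+\rho V_{r}\big)+(N-1)r^{N-2}\rho V=\partial_{r}\big(r^{N-1}\rho V\big),
\end{equation}
which turns the equation into $\partial_{t}(r^{N-1}\rho)+\partial_{r}(r^{N-1}\rho V)=0$. Since $\bar{\rho}$ is constant, $\partial_{t}\big(r^{N-1}(\rho-\bar{\rho})\big)=\partial_{t}(r^{N-1}\rho)$, so
\begin{equation}
\partial_{t}\big(r^{N-1}(\rho-\bar{\rho})\big)=-\partial_{r}\big(r^{N-1}\rho V\big).
\end{equation}

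Next, Corollary $\ref{c4}$ gives $(\rho,V)=(\bar{\rho},0)$ for $r\geq R+\sigma t$, so for each fixed $t\in[0,T)$ the integrand $r^{N-1}(\rho-\bar{\rho})$ is supported in $[0,R+\sigma t]$ and $m_{1}(t)=\int_{0}^{R+\sigma t}(\rho-\bar{\rho})r^{N-1}\,dr$ is a finite integral of a $C^{1}$ function; in particular $m_{1}$ is well defined. Differentiating (the moving upper limit contributes nothing because $\rho-\bar{\rho}=0$ there) and inserting the displayed identity,
\begin{equation}
{m_{1}}^{\prime}(t)=\int_{0}^{R+\sigma t}\partial_{t}\big(r^{N-1}(\rho-\bar{\rho})\big)\,dr=-\int_{0}^{R+\sigma t}\partial_{r}\big(r^{N-1}\rho V\big)\,dr=-\Big[r^{N-1}\rho V\Big]_{0}^{R+\sigma t}.
\end{equation}
The upper endpoint vanishes since $V(t,R+\sigma t)=0$; the lower endpoint vanishes because for $N\geq2$ the weight $r^{N-1}$ tends to $0$ while $\rho V$ stays bounded, and for $N=1$ the continuity of $u=(x/r)V$ at the origin forces $V(t,0)=0$. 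Hence ${m_{1}}^{\prime}(t)=0$ on $[0,T)$, i.e.\ $m_{1}(t)=m_{1}(0)$.

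The only step that genuinely needs attention is the vanishing of the boundary term at $r=0$ when $N=1$: there the geometric weight is absent, so one must explicitly invoke the regularity of the velocity field at the origin to get $V(t,0)=0$; for $N\geq2$ this is automatic. Everything else is a routine computation once the conservative form and Corollary $\ref{c4}$ are available.
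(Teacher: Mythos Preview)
Your proposal is correct and follows essentially the same route as the paper: rewrite the radial continuity equation in the conservative form $\partial_{t}(r^{N-1}\rho)+\partial_{r}(r^{N-1}\rho V)=0$, integrate in $r$, use Corollary~\ref{c4} to kill the contribution at $r=R+\sigma t$, and treat the endpoint $r=0$ separately for $N\geq2$ (via the weight $r^{N-1}$) and $N=1$ (via the odd-symmetry/continuity argument forcing $V(t,0)=0$). The paper's proof is the same computation, only organized slightly differently (it differentiates under the improper integral over $[0,\infty)$ first and restricts to $[0,R+\sigma t]$ afterward, rather than invoking Leibniz's rule on the truncated integral as you do).
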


\begin{proof}
Note that the integral is well defined by Corollary $\ref{c4}$. Thus, we have%
\begin{equation}
{m_{1}}^{\prime}(t)=\int_{0}^{\infty}\rho_{t}r^{N-1}dr
\end{equation}%
\begin{equation}
=-\int_{0}^{\infty}\left(  (V\rho)_{r}+\frac{N-1}{r}\rho V\right)  r^{N-1}dr
\end{equation}%
\begin{equation}
=-\int_{0}^{\infty}\left(  r^{N-1}(V\rho)_{r}+(\rho V)(N-1)r^{N-2}\right)  dr
\end{equation}%
\begin{equation}
=-\int_{0}^{\infty}\left(  r^{N-1}\rho V\right)  _{r}dr
\end{equation}%
\begin{equation}
=-\int_{0}^{R+\sigma t}\left(  r^{N-1}\rho V\right)  _{r}dr \label{e53}%
\end{equation}%
\begin{equation}
=-\left[  r^{N-1}\rho V\right]  _{0}^{R+\sigma t} \label{eee57}%
\end{equation}%
\begin{equation}
=0\text{,}%
\end{equation}
for $N>1$.\newline For $N=1$, expression ($\ref{eee57}$) is still zero, as by
continuity,%
\begin{equation}
V(t,0)=\lim_{x\rightarrow0^{+}}u(t,x)=\lim_{x\rightarrow0^{-}}%
u(t,x)=-V(t,0)\text{,}%
\end{equation}
which implies $V(t,0)=0$.
\end{proof}

\begin{remark}
It should be noted that function $m_{1}(t)$ in the above lemma is a radial
symmetry version of the $m(t)$ function in $\cite{SI}$.
\end{remark}

\begin{remark}
Similarly, ${m_{2}}^{\prime}(t)=0$ if $m_{2}(t)=\int_{-\infty}^{+\infty}%
(\rho(t,x)-\bar{\rho})dx$ for the $1$-dimensional Euler equations in the
non-radial symmetry case.
\end{remark}

Now, we are ready to present the proof of Theorem $\ref{t7}$.

\begin{theorem}
\label{t7}Fix $\tau>0$. Under the setting of Corollary $\ref{c4}$, we
have\newline Case 1: $\gamma\geq2$ and $m_{1}(0)\geq0$. If $H_{3}(0)$ is large
enough such that%
\begin{equation}
H_{3}(0)>\frac{2\sigma R^{N+1}(R+\sigma\tau)^{N+1}}{N[(R+\sigma\tau
)^{N+1}-R^{N+1}]}\text{,}%
\end{equation}
then the time $T<\tau$.\newline Case 2: $\gamma=2$ and $m_{1}(0)<0$. If
$H_{3}(0)$ is large enough such that%
\begin{equation}
H_{3}(0)>\frac{a\sigma R^{N+1}(R+\sigma\tau)^{N+1}}{N[(R+\sigma\tau
)^{N+1}-R^{N+1}]}\text{,} \label{58}%
\end{equation}
then the time $T<\tau$,\newline where%
\begin{equation}
H_{3}(t)=\int_{0}^{\infty}r^{N}V(t,r)dr\text{,}%
\end{equation}%
\begin{equation}
m_{1}(t)=\int_{0}^{\infty}(\rho(t,r)-\bar{\rho})r^{N-1}dr
\end{equation}
and%
\begin{equation}
a=1+\sqrt{1+\frac{-4N^{2}Km_{1}(0)[(R+\sigma\tau)^{N+1}-R^{N+1}]^{2}%
}{(N+1)\sigma^{2}R^{2N+2}(R+\sigma\tau)^{N}}}\text{.} \label{61}%
\end{equation}

\end{theorem}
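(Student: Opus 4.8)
The plan is to rerun the integration method from the proof of Theorem~\ref{tt6} with the testing function $f(r)=r^{N}$, the new feature being that the pressure term is controlled by the conserved quantity $m_{1}$ of Lemma~\ref{lemmaofmt} rather than by the crude bound $\rho^{\gamma-1}\geq0$. First I would multiply equation~(\ref{eq19}) by $r^{N}$ and integrate over $[0,\infty)$; by Corollary~\ref{c4} the integrands are supported in $[0,R+\sigma t]$, so after integration by parts (every boundary term drops out, since $V=0$ and $\rho=\bar{\rho}$ at $r=R+\sigma t$ while $r^{N}=0$ at $r=0$) one obtains
\begin{equation}
\dot{H_{3}}(t)=\frac{N}{2}\int_{0}^{R+\sigma t}r^{N-1}V^{2}\,dr+\frac{K\gamma N}{\gamma-1}\int_{0}^{R+\sigma t}r^{N-1}\left(\rho^{\gamma-1}-\bar{\rho}^{\gamma-1}\right)dr .
\end{equation}
For the pressure integral I would use convexity of $x\mapsto x^{\gamma-1}$ (valid since $\gamma\geq2$), namely $\rho^{\gamma-1}-\bar{\rho}^{\gamma-1}\geq(\gamma-1)\bar{\rho}^{\gamma-2}(\rho-\bar{\rho})$; multiplying by $r^{N-1}\geq0$, integrating, and using Lemma~\ref{lemmaofmt} (which gives $\int_{0}^{R+\sigma t}r^{N-1}(\rho-\bar{\rho})\,dr=m_{1}(t)=m_{1}(0)$) yields $\frac{K\gamma N}{\gamma-1}\int_{0}^{R+\sigma t}r^{N-1}(\rho^{\gamma-1}-\bar{\rho}^{\gamma-1})\,dr\geq K\gamma N\bar{\rho}^{\gamma-2}m_{1}(0)$, which for $\gamma=2$ is the identity with value $2KNm_{1}(0)$. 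Hence in Case~1 ($m_{1}(0)\geq0$) the pressure contribution is nonnegative and can be discarded, whereas in Case~2 ($\gamma=2$, $m_{1}(0)<0$) it equals the fixed negative number $2KNm_{1}(0)$.

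Next, applying the Cauchy--Schwarz inequality exactly as in Theorem~\ref{tt6} with $f(r)=r^{N}$, and setting $B_{1}(t):=\frac{(R+\sigma t)^{N+2}}{N(N+1)}$ (an upper bound for $\int_{0}^{R+\sigma t}\frac{r^{2N}}{Nr^{N-1}}\,dr$, obtained from $r^{N+1}\leq(R+\sigma t)\,r^{N}$ on $[0,R+\sigma t]$), one arrives at $\dot{H_{3}}(t)\geq H_{3}^{2}(t)/(2B_{1}(t))$ in Case~1 and $\dot{H_{3}}(t)\geq H_{3}^{2}(t)/(2B_{1}(t))+2KNm_{1}(0)$ in Case~2. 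In Case~1 the hypothesis forces $H_{3}(0)>0$, so $H_{3}$ is positive and nondecreasing; dividing $\dot{H_{3}}\geq H_{3}^{2}/(2B_{1})$ by $H_{3}^{2}$ and integrating gives $\frac{1}{H_{3}(0)}-\frac{1}{H_{3}(t)}\geq\int_{0}^{t}\frac{ds}{2B_{1}(s)}$, and the stated lower bound on $H_{3}(0)$ is exactly what makes the right side at $t=\tau$ exceed $1/H_{3}(0)$, so that $H_{3}$ blows up before $\tau$, i.e.\ $T<\tau$.

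In Case~2 I would copy the $a$-splitting of Theorem~\ref{tt6}: writing
\begin{equation}
\dot{H_{3}}(t)\geq\frac{H_{3}^{2}(t)}{aB_{1}(t)}+\left[\frac{(a-2)H_{3}^{2}(t)}{2aB_{1}(\tau)}+2KNm_{1}(0)\right]=:\frac{H_{3}^{2}(t)}{aB_{1}(t)}+G_{3}(t),
\end{equation}
the number $a$ of (\ref{61}) is precisely the root larger than $2$ of $a(a-2)=\frac{-4KN^{2}m_{1}(0)[(R+\sigma\tau)^{N+1}-R^{N+1}]^{2}}{(N+1)\sigma^{2}R^{2N+2}(R+\sigma\tau)^{N}}$, and this value is chosen so that the requirement $G_{3}(0)\geq0$ — equivalently $H_{3}^{2}(0)\geq\frac{4aK|m_{1}(0)|(R+\sigma\tau)^{N+2}}{(a-2)(N+1)}$ — is already implied by the hypothesis~(\ref{58}). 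The contradiction argument of Theorem~\ref{tt6} (at a putative first zero $t_{2}$ of $G_{3}$ one has $\dot{H_{3}}(t_{2})\geq0$, so $H_{3}$ cannot decrease there, so $G_{3}$ cannot cross $0$ downward) then gives $G_{3}\geq0$ on $[0,\tau]$, hence $\dot{H_{3}}\geq H_{3}^{2}/(aB_{1}(t))$, and integrating together with (\ref{58}) yields $T<\tau$.

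The step requiring the most care is the bookkeeping that pins down $a$ in Case~2: one must check that the quadratic in (\ref{61}) makes the two a priori separate ``$T<\tau$'' requirements — $G_{3}(0)\geq0$ and $\int_{0}^{\tau}\frac{ds}{aB_{1}(s)}>\frac{1}{H_{3}(0)}$ — simultaneous consequences of the single inequality~(\ref{58}); in practice this means tracking the powers of $R$, $R+\sigma\tau$, $N$, $\sigma$ and $K$ through the Cauchy--Schwarz estimate and the Riccati integration without slippage. Everything else is a straightforward transcription of the proof of Theorem~\ref{tt6}, with Lemma~\ref{lemmaofmt} and convexity fixing the sign of the pressure term in place of the hypothesis~(\ref{9}).
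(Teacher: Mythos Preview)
Your proposal is correct and follows the paper's own proof almost verbatim: the identity for $\dot H_3$, the Cauchy--Schwarz bound with $B_1(t)=(R+\sigma t)^{N+2}/\bigl(N(N+1)\bigr)$, the Riccati integration in Case~1, and the $a$-splitting plus the quadratic determination of $a$ in Case~2 all match the paper exactly. The one genuine (minor) difference is your treatment of the pressure integral for $\gamma>2$: you invoke convexity of $x\mapsto x^{\gamma-1}$ to get $\rho^{\gamma-1}-\bar\rho^{\gamma-1}\ge(\gamma-1)\bar\rho^{\gamma-2}(\rho-\bar\rho)$ and then use $m_1(t)=m_1(0)\ge0$, whereas the paper uses H\"older's inequality on $\int\rho\,r^{N-1}dr$ to reach the same nonnegativity conclusion; your route is slightly more direct and also yields the explicit lower bound $K\gamma N\bar\rho^{\gamma-2}m_1(0)$ rather than just zero, but the paper's H\"older argument is what appears there.
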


\begin{proof}
For function $f(r)=r^{N}$, equation ($\ref{28}$) becomes%
\begin{equation}
\dot{H_{3}}(t)=\frac{N}{2}\int_{0}^{R+\sigma t}V^{2}r^{N-1}dr+\frac{KN\gamma
}{\gamma-1}\int_{0}^{R+\sigma t}(\rho^{\gamma-1}-\bar{\rho}^{\gamma-1}%
)r^{N-1}dr\text{.}%
\end{equation}
The Cauchy Inequality can be applied to confirm that%
\begin{equation}
H_{3}^{2}(t)\leq\frac{(R+\sigma t)^{N+2}}{(N+1)}\int_{0}^{R+\sigma t}%
V^{2}r^{N-1}dr\text{.}%
\end{equation}
Thus,%
\begin{equation}
\dot{H_{3}}(t)\geq\frac{N(N+1)}{2(R+\sigma t)^{N+2}}H_{3}^{2}(t)+\frac
{KN\gamma}{\gamma-1}\int_{0}^{R+\sigma t}(\rho^{\gamma-1}-\bar{\rho}%
^{\gamma-1})r^{N-1}dr\text{.} \label{e64}%
\end{equation}
\newline For $\gamma>2$, it can be shown by Holder's Inequality that the
second term on the right-hand side of the above equation is greater than or
equal to zero. More precisely, for $\gamma>2$, as $m_{1}(t)=m_{1}(0)\geq0$, we
have%
\begin{equation}
\int_{0}^{R+\sigma t}\bar{\rho}r^{N-1}dr\leq\int_{0}^{R+\sigma t}\rho
r^{N-1}dr\leq\left(  \int_{0}^{R+\sigma t}\rho^{\gamma-1}r^{N-1}dr\right)
^{\frac{1}{\gamma-1}}\left(  \int_{0}^{R+\sigma t}(1)r^{N-1}dr\right)
^{1-\frac{1}{\gamma-1}}\text{.}%
\end{equation}
It follows that%
\begin{equation}
\bar{\rho}^{\gamma-1}\int_{0}^{R+\sigma t}r^{N-1}dr\leq\int_{0}^{R+\sigma
t}\rho^{\gamma-1}r^{N-1}dr
\end{equation}%
\begin{equation}
\int_{0}^{R+\sigma t}(\rho^{\gamma-1}-\bar{\rho}^{\gamma-1})r^{N-1}%
dr\geq0\text{.}%
\end{equation}
\newline For $\gamma=2$, equation ($\ref{e64}$) becomes%
\begin{equation}
\dot{H_{3}}(t)\geq\frac{N(N+1)}{2(R+\sigma t)^{N+2}}H_{3}^{2}(t)+2KNm_{1}%
(0)\text{.} \label{e68}%
\end{equation}
For $\gamma\geq2$ and $m_{1}(0)\geq0$, we have%
\begin{equation}
\dot{H_{3}}(t)\geq\frac{N(N+1)}{2(R+\sigma t)^{N+2}}H_{3}^{2}(t)\text{.}%
\end{equation}
As $H_{3}(0)>0$, we have $H_{3}(t)\geq0$ for $t\geq0$ and%
\begin{equation}
\frac{1}{H_{3}(0)}-\frac{1}{H_{3}(t)}\leq\int_{0}^{t}\frac{N(N+1)}{2(R+\sigma
s)^{N+2}}ds\text{.}%
\end{equation}
Therefore, for $0\leq t\leq\tau$, we have%
\begin{equation}
\frac{1}{H_{3}(0)}-\frac{1}{H_{3}(t)}\leq\int_{0}^{\tau}\frac{N(N+1)}%
{2(R+\sigma s)^{N+2}}ds=\frac{N\left[  (R+\sigma\tau)^{N+1}-R^{N+1}\right]
}{2\sigma R^{N+1}(R+\sigma\tau)^{N+1}}\text{.}%
\end{equation}
The result of Case 1 follows.

For Case 2, from equation ($\ref{e68}$), we have%
\begin{equation}
\dot{H_{3}}(t)\geq\frac{N(N+1)}{a(R+\sigma t)^{N+2}}H_{3}^{2}(t)+\left[
\frac{(a-2)N(N+1)}{2a(R+\sigma t)^{N+2}}H_{3}^{2}(t)+2KNm_{1}(0)\right]
\end{equation}%
\begin{equation}
\geq\frac{N(N+1)}{a(R+\sigma t)^{N+2}}H_{3}^{2}(t)+\left[  \frac
{(a-2)N(N+1)}{2a(R+\sigma\tau)^{N+2}}H_{3}^{2}(t)+2KNm_{1}(0)\right]
\end{equation}%
\begin{equation}
=:\frac{N(N+1)}{a(R+\sigma t)^{N+2}}H_{3}^{2}(t)+G_{3}(t)\text{,}%
\end{equation}
for $0\leq t\leq\tau$.\newline Suppose%
\begin{equation}
G_{3}(0)>0\text{,}%
\end{equation}
or equivalently,%
\begin{equation}
H_{3}^{2}(0)>\frac{-4aKm_{1}(0)(R+\sigma\tau)^{N+2}}{(a-2)(N+1)}\text{,}
\label{e76}%
\end{equation}
where the value of $a>2$ will be determined later.\newline As $G_{3}(0)>0$, we
have $G_{3}(t)\geq0$ and%
\begin{equation}
\dot{H_{3}}(t)\geq\frac{N(N+1)}{a(R+\sigma t)^{N+2}}H_{3}^{2}(t)\text{,}%
\end{equation}
for $0\leq t\leq\tau$.\newline Hence,%
\begin{equation}
0<\frac{1}{H_{3}(t)}\leq\frac{1}{H_{3}(0)}-\int_{0}^{t}\frac{N(N+1)}%
{a(R+\sigma s)^{N+2}}ds
\end{equation}%
\begin{equation}
=\frac{1}{H_{3}(0)}-\frac{N[(R+\sigma t)^{N+1}-R^{N+1}]}{a\sigma
R^{N+1}(R+\sigma t)^{N+1}}%
\end{equation}
for $0\leq t\leq\tau$.\newline Then, we have the time $T<\tau$ if%
\begin{equation}
H_{3}(0)\geq\frac{a\sigma R^{N+1}(R+\sigma\tau)^{N+1}}{N[(R+\sigma\tau
)^{N+1}-R^{N+1}]}\text{.} \label{Eq78}%
\end{equation}
By solving the following equation for $a>2$ for the equation%
\begin{equation}
\frac{a\sigma R^{N+1}(R+\sigma\tau)^{N+1}}{N[(R+\sigma\tau)^{N+1}-R^{N+1}%
]}=\sqrt{\frac{-4aKm_{1}(0)(R+\sigma\tau)^{N+2}}{(a-2)(N+1)}}\text{,}
\label{811}%
\end{equation}
we obtain the value of $a$ in equation ($\ref{61}$) and hence condition
($\ref{58}$) implies conditions ($\ref{e76}$) and ($\ref{Eq78}$). The proof of
Case 2 is complete.
\end{proof}

Next, we have the following theorem for the $1$-dimensional Euler equations
$(\ref{Euler})$ in non-radial symmetry.

\begin{theorem}
\label{t9}Under the setting of Lemma $\ref{tt2}$ with $N=1$ and $\gamma\geq2$,
if%
\begin{equation}
H_{4}(0)>\frac{8\sigma R^{2}}{3} \label{21}%
\end{equation}
and%
\begin{equation}
m_{2}(0)\geq0\text{,}%
\end{equation}
then the $C^{1}$ non-vacuum solutions blow up on a finite time $T_{1}$, where%
\begin{equation}
H_{4}(t)=\int_{-\infty}^{+\infty}xu(t,x)dx \label{eq10}%
\end{equation}
and%
\begin{equation}
m_{2}(t)=\int_{-\infty}^{+\infty}(\rho(t,x)-\bar{\rho})dx\text{.}%
\end{equation}

\end{theorem}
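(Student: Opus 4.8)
The plan is to carry out, in the one-dimensional non-radial setting, the same integration-method argument used for Theorem~\ref{t7} (the $\gamma\ge2$, $m_1(0)\ge0$ case), now with the functional $H_4$ and the weight $f(x)=x$. First I would rewrite the one-dimensional momentum equation for non-vacuum data as
\[
u_t + \tfrac{1}{2}\partial_x(u^2) + \tfrac{K\gamma}{\gamma-1}\partial_x(\rho^{\gamma-1}-\bar\rho^{\gamma-1}) = 0,
\]
multiply by $f(x)=x$, and integrate over $\mathbb{R}$. By Lemma~\ref{tt2} with $N=1$, both $u$ and $\rho-\bar\rho$ are supported in $\{|x|\le R+\sigma t\}$, so the integration reduces to $[-L,L]$ with $L=R+\sigma t$, and every boundary term generated by integration by parts vanishes (for instance $f(-L)u^2(t,-L)=-L\,u^2(t,-L)=0$, and likewise for the pressure term, since $u(t,\pm L)=0$ and $\rho(t,\pm L)=\bar\rho$). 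This produces the identity
\[
\dot H_4(t) = \tfrac{1}{2}\int_{-L}^{L}u^2\,dx + \tfrac{K\gamma}{\gamma-1}\int_{-L}^{L}(\rho^{\gamma-1}-\bar\rho^{\gamma-1})\,dx,
\]
which is the analogue of equation~(\ref{48}) for the weight $x$.

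Next I would estimate the two terms on the right. The Cauchy inequality gives
\[
H_4^2(t) = \left(\int_{-L}^{L}x\,u\,dx\right)^{2} \le \left(\int_{-L}^{L}x^2\,dx\right)\left(\int_{-L}^{L}u^2\,dx\right) = \frac{2L^3}{3}\int_{-L}^{L}u^2\,dx,
\]
so $\int_{-L}^{L}u^2\,dx \ge 3H_4^2(t)/\bigl(2(R+\sigma t)^3\bigr)$. For the pressure integral I would invoke the conservation identity $m_2(t)=m_2(0)$ stated in the remark following Lemma~\ref{lemmaofmt}; since $m_2(0)\ge0$, Holder's inequality with exponents $\gamma-1$ and $(\gamma-1)/(\gamma-2)$ yields $\int_{-L}^{L}(\rho^{\gamma-1}-\bar\rho^{\gamma-1})\,dx\ge0$ for $\gamma>2$, exactly as in the proof of Theorem~\ref{t7}, while for $\gamma=2$ this integral equals $2Km_2(0)\ge0$ outright. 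Hence, for all $\gamma\ge2$,
\[
\dot H_4(t) \ge \frac{3H_4^2(t)}{4(R+\sigma t)^3}.
\]

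Finally, since $H_4(0)>0$ by \eqref{21}, this differential inequality forces $H_4(t)>0$ and nondecreasing for as long as the $C^1$ solution exists, so $-\tfrac{d}{dt}\bigl(1/H_4(t)\bigr)\ge 3/\bigl(4(R+\sigma t)^3\bigr)$; integrating from $0$ to $t$ gives
\[
0 < \frac{1}{H_4(t)} \le \frac{1}{H_4(0)} - \frac{3}{8\sigma}\left(\frac{1}{R^2} - \frac{1}{(R+\sigma t)^2}\right).
\]
Because $\int_0^{\infty} 3/\bigl(4(R+\sigma s)^3\bigr)\,ds = 3/(8\sigma R^2)$, the hypothesis $H_4(0)>8\sigma R^2/3$ makes the right-hand side strictly negative for $t$ large enough, contradicting positivity of $H_4$; therefore the life span is finite, i.e.\ the solution blows up at some finite $T_1$. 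I expect the only genuinely delicate points to be transplanting Lemma~\ref{lemmaofmt} and the Holder estimate from the radial setting to the whole line $\mathbb{R}$, and verifying that the non-negativity assumption on the weight (which fails for $f(x)=x$) is in fact not needed here, since finite propagation already annihilates the boundary terms.
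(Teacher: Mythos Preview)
Your proposal is correct and follows essentially the same route as the paper: derive the identity for $\dot H_4(t)$ via integration by parts with weight $f(x)=x$, use $m_2(t)=m_2(0)\ge0$ together with H\"older (or the direct identity when $\gamma=2$) to drop the pressure term, apply Cauchy--Schwarz to get the Riccati-type inequality $\dot H_4\ge 3H_4^2/\bigl(4(R+\sigma t)^3\bigr)$, and integrate to reach a contradiction with \eqref{21} as $t\to\infty$. Your closing remarks about why the sign of the weight is harmless and about transplanting the mass conservation to the whole line are accurate and match what the paper uses implicitly.
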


\begin{proof}
For function $f(x)=x$, equation ($\ref{48}$) becomes%
\begin{equation}
\dot{H_{4}}(t)=\frac{1}{2}\int_{-R-\sigma t}^{R+\sigma t}u^{2}dx+\frac
{K\gamma}{\gamma-1}\int_{-R-\sigma t}^{R+\sigma t}(\rho^{\gamma-1}-\bar{\rho
}^{\gamma-1})dx\text{.} \label{72}%
\end{equation}
For $\gamma=2$, the second term on the right-hand side of the above equation
is $\frac{K\gamma}{\gamma-1}m_{2}(0)$, which is greater than or equal to
zero.\newline For $\gamma>2$, it can be shown by Holder's inequality that the
second term on the right-hand side of the above equation is greater than or
equal to zero. More precisely, for $\gamma>2$, as $m_{2}(t)=m_{2}(0)\geq0$, we
have%
\begin{equation}
\int_{-R-\sigma t}^{R+\sigma t}\bar{\rho}dx\leq\int_{-R-\sigma t}^{R+\sigma
t}\rho dx\leq\left(  \int_{-R-\sigma t}^{R+\sigma t}\rho^{\gamma-1}dx\right)
^{\frac{1}{\gamma-1}}\left(  \int_{-R-\sigma t}^{R+\sigma t}(1)dx\right)
^{1-\frac{1}{\gamma-1}}\text{.}%
\end{equation}
It follows that%
\begin{equation}
\bar{\rho}^{\gamma-1}\int_{-R-\sigma t}^{R+\sigma t}dx\leq\int_{-R-\sigma
t}^{R+\sigma t}\rho^{\gamma-1}dx
\end{equation}%
\begin{equation}
\int_{-R-\sigma t}^{R+\sigma t}(\rho^{\gamma-1}-\bar{\rho}^{\gamma-1}%
)dx\geq0\text{.}%
\end{equation}
Thus,%
\begin{equation}
\dot{H_{4}}(t)\geq\frac{1}{2}\int_{-R-\sigma t}^{R+\sigma t}u^{2}dx\text{.}%
\end{equation}
\newline The Cauchy Inequality can be used to check%
\begin{equation}
H_{4}^{2}(t)\leq\left(  \int_{-R-\sigma t}^{R+\sigma t}u^{2}dx\right)  \left(
\frac{2(R+\sigma t)^{3}}{3}\right)  \text{.} \label{e87}%
\end{equation}
Thus,%
\begin{equation}
\dot{H_{4}}(t)\geq\frac{3H_{4}^{2}(t)}{4(R+\sigma t)^{3}}\text{.} \label{75}%
\end{equation}
Hence,%
\begin{equation}
0<\frac{1}{H_{4}(t)}\leq\frac{1}{H_{4}(0)}-\frac{3}{8\sigma}\left[  \frac
{1}{R^{2}}-\frac{1}{(R+\sigma t)^{2}}\right]  \text{.} \label{76}%
\end{equation}
If the solutions are global, then by letting $t\rightarrow\infty$, we have%
\begin{equation}
0\leq\frac{1}{H_{4}(0)}-\frac{3}{8\sigma R^{2}}\text{,}%
\end{equation}
which contradicts condition ($\ref{21}$).\newline The proof is complete.
\end{proof}

Last, we present the following corollary, which is easily obtained from the
proof of Theorem $\ref{t9}$.

\begin{corollary}
Fix $\tau>0$. Under the setting of Lemma $\ref{tt2}$ with $N=1$ and
$\gamma\geq2$, we have\newline Case $1$: $\gamma\geq2$ and $m_{2}(0)\geq0$. If
$H_{4}(0)$ is large enough such that%
\begin{equation}
H_{4}(0)\geq\frac{8R^{2}(R+\sigma\tau)^{2}}{3\tau(2R+\sigma\tau)}\text{,}%
\end{equation}
then the time $T<\tau$.\newline Case $2$: $\gamma=2$ and $m_{2}(0)<0$. If
$H_{4}(0)$ is large enough such that%
\begin{equation}
H_{4}(0)>\frac{2aR^{2}(R+\sigma\tau)^{2}}{\tau(2R+\sigma\tau)}\text{,}
\label{95}%
\end{equation}
then the time $T<\tau$, where%
\begin{equation}
a=\frac{2}{3}+\sqrt{\frac{4}{9}-\frac{6Km_{2}(0)\tau^{2}(2R+\sigma\tau)^{2}%
}{9R^{4}(R+\sigma\tau)}}\text{.} \label{96}%
\end{equation}

\end{corollary}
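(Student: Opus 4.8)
The plan is to reuse the differential-inequality machinery from the proof of Theorem~\ref{t9}, but to integrate only up to the fixed time $\tau$ rather than letting $t\to\infty$; in Case~2 I will additionally insert a free parameter $a$ in the style of the proof of Theorem~\ref{t7} and fix its value at the end so that one lower bound on $H_4(0)$ simultaneously guarantees the positivity of an auxiliary quantity and the blowup estimate.

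For Case~1, start from equation~(\ref{72}) with $f(x)=x$. As in the proof of Theorem~\ref{t9}, the density integral there is nonnegative: for $\gamma=2$ it equals $2Km_2(0)\ge 0$ since $m_2(t)=m_2(0)$ by the remark following Lemma~\ref{lemmaofmt}, and for $\gamma>2$ it is $\ge 0$ by H\"older's inequality together with $m_2(0)\ge 0$. Hence the Cauchy inequality~(\ref{e87}) gives $\dot{H_4}(t)\ge \dfrac{3H_4^2(t)}{4(R+\sigma t)^3}$. Because the assumed lower bound on $H_4(0)$ is positive, $H_4(t)>0$ for as long as the solution exists, and integrating over $[0,\tau]$ yields
\[
\frac{1}{H_4(0)}-\frac{1}{H_4(\tau)}\ \ge\ \frac34\int_0^\tau\frac{ds}{(R+\sigma s)^3}=\frac{3\tau(2R+\sigma\tau)}{8R^2(R+\sigma\tau)^2}.
\]
The hypothesis $H_4(0)\ge \dfrac{8R^2(R+\sigma\tau)^2}{3\tau(2R+\sigma\tau)}$ then forces $1/H_4(\tau)\le 0$, impossible if the $C^1$ solution persists up to $\tau$; hence $T<\tau$.

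For Case~2 ($\gamma=2$, $m_2(0)<0$), equations~(\ref{72}) and~(\ref{e87}) give $\dot{H_4}(t)\ge \dfrac{3H_4^2(t)}{4(R+\sigma t)^3}+2Km_2(0)$. For a parameter $a>\tfrac43$ I would peel off a main term $\dfrac{H_4^2(t)}{a(R+\sigma t)^3}$ and bound $R+\sigma t\le R+\sigma\tau$ in the remainder:
\[
\dot{H_4}(t)\ \ge\ \frac{H_4^2(t)}{a(R+\sigma t)^3}+\Big[\frac{(3a-4)H_4^2(t)}{4a(R+\sigma\tau)^3}+2Km_2(0)\Big]=:\frac{H_4^2(t)}{a(R+\sigma t)^3}+G_4(t).
\]
If $G_4(0)>0$, the bootstrapping argument in the proof of Theorem~\ref{tt6} shows $G_4(t)\ge 0$ on $[0,\tau]$ (here $3a-4>0$ makes $G_4$ increasing in $H_4^2$), so $\dot{H_4}(t)\ge \dfrac{H_4^2(t)}{a(R+\sigma t)^3}$, and integrating over $[0,\tau]$ gives $T<\tau$ once $H_4(0)\ge \dfrac{2aR^2(R+\sigma\tau)^2}{\tau(2R+\sigma\tau)}$. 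It remains to choose $a$ so that this last threshold matches the one making $G_4(0)>0$, namely $H_4^2(0)>\dfrac{-8aKm_2(0)(R+\sigma\tau)^3}{3a-4}$: equating the two squared thresholds and clearing denominators reduces to $3a^2-4a+\dfrac{2Km_2(0)\tau^2(2R+\sigma\tau)^2}{R^4(R+\sigma\tau)}=0$, whose larger root is exactly the $a$ in~(\ref{96}); since $m_2(0)<0$ this root exceeds $\tfrac43$, so the split above is legitimate. With this $a$, condition~(\ref{95}) implies both requirements and Case~2 follows.

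The routine pieces are the elementary integral $\int_0^\tau (R+\sigma s)^{-3}\,ds$ and the bootstrap on $G_4$, both essentially already done above. The one step needing care is the final calibration of $a$ in Case~2: one must verify that forcing the ``$G_4(0)>0$'' and ``integration'' thresholds to coincide produces precisely the quadratic whose root is displayed in~(\ref{96}), and that this root lies in the admissible range $a>\tfrac43$ so that the decomposition of the inequality used above is valid.
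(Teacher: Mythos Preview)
Your proposal is correct and follows the same route as the paper: Case~1 is obtained by integrating the Riccati inequality~(\ref{75}) on $[0,\tau]$ (the paper simply cites~(\ref{76}) evaluated at $t=\tau$), and Case~2 uses exactly the same parameter-splitting $\dot{H_4}\ge H_4^2/(a(R+\sigma t)^3)+G_4(t)$ with $a>4/3$, the same bootstrap for $G_4\ge 0$, and the same calibration of $a$ by equating the two thresholds, yielding the quadratic $3a^2-4a+2Km_2(0)\tau^2(2R+\sigma\tau)^2/(R^4(R+\sigma\tau))=0$ whose larger root is~(\ref{96}). Your check that this root exceeds $4/3$ because $m_2(0)<0$ is exactly what is needed.
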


\begin{proof}
The result of Case 1 follows from equation ($\ref{76}$).

For Case 2, from equation ($\ref{72}$), we have%
\begin{equation}
\dot{H_{4}}(t)=\frac{1}{2}\int_{-R-\sigma t}^{R+\sigma t}u^{2}dx+2Km_{2}%
(0)\text{.}%
\end{equation}
From equation ($\ref{e87}$), we have%
\begin{equation}
\int_{-R-\sigma t}^{R+\sigma t}u^{2}dx\geq\frac{3H_{4}^{2}(t)}{2(R+\sigma
t)^{3}}\text{.}%
\end{equation}
Thus,%
\begin{equation}
\dot{H_{4}}(t)\geq\frac{3H_{4}^{2}(t)}{4(R+\sigma t)^{3}}+2Km_{2}(0)
\end{equation}%
\begin{equation}
=\frac{H_{4}^{2}(t)}{a(R+\sigma t)^{3}}+\left[  \frac{3a-4}{4a}\frac{H_{4}%
^{2}(t)}{(R+\sigma t)^{3}}+2Km_{2}(0)\right]
\end{equation}%
\begin{equation}
\geq\frac{H_{4}^{2}(t)}{a(R+\sigma t)^{3}}+\left[  \frac{3a-4}{4a}\frac
{H_{4}^{2}(t)}{(R+\sigma\tau)^{3}}+2Km_{2}(0)\right]
\end{equation}%
\begin{equation}
:=\frac{H_{4}^{2}(t)}{a(R+\sigma t)^{3}}+G_{4}(t)
\end{equation}
for $0\leq t\leq\tau$.\newline Suppose
\begin{equation}
G_{4}(0)>0
\end{equation}
or equivalently,%
\begin{equation}
H_{4}^{2}(0)>\frac{-8aKm_{2}(0)(R+\sigma\tau)^{3}}{(3a-4)}\text{,} \label{104}%
\end{equation}
where the value of $a>4/3$ will be determined later.\newline As before, we
have $G_{4}(t)\geq0$ and%
\begin{equation}
\dot{H_{4}}(t)\geq\frac{H_{4}^{2}(t)}{a(R+\sigma t)^{3}}%
\end{equation}
for $0\leq t\leq\tau$.\newline Therefore, the time $T<\tau$ if
\begin{equation}
H_{4}(0)\geq\frac{2aR^{2}(R+\sigma\tau)^{2}}{\tau(2R+\sigma\tau)}\text{.}
\label{106}%
\end{equation}
Now, solving the equation in $a>4/3$ for the equation%
\begin{equation}
\frac{2aR^{2}(R+\sigma\tau)^{2}}{\tau(2R+\sigma\tau)}=\sqrt{\frac
{-8aKm_{2}(0)(R+\sigma\tau)^{3}}{(3a-4)}}\text{,}%
\end{equation}
we obtain the value of $a$ in equation ($\ref{96}$). Hence condition
($\ref{95}$) implies conditions ($\ref{104}$) and ($\ref{106}$).\newline The
proof is complete.
\end{proof}

\section{Conclusions}

In this article, we provide several new blowup results for the Euler equations
($\ref{Euler}$) for $N=1$ and general $N$-dimensional Euler equations in
radial symmetry ($\ref{eqRS}$) with initial non-vacuum conditions.
Specifically, we show that if the initial function $H_{i}(0)$ is large enough,
then blowup occurs on or before a finite time and the corresponding blowup
time can be estimated. In particular, the new class of testing functions in
Theorem $\ref{tt6}$ consists of general, non-negative, strictly increasing
$C^{1\text{ }}$functions $f(r)$. This is our main contribution.

The similar analysis can be applied to obtain the corresponding blowup results
for the compressible Euler equations with linear damping.

\section{Acknowledgement}

The research in this paper was partially supported by the Dean's Research
Grant FLASS/ECR-9 from the Hong Kong Institute of Education. We thank for the
reviewers' comments for improving this article.

\section*{Appendix}

\textbf{Proof of Lemma \ref{tt2}.}\newline Define%
\begin{equation}
v=\frac{2}{\gamma-1}\left(  \sqrt{{P}^{\prime}(\rho)}-\sigma\right)  \text{,}%
\end{equation}
where $P$ is regarded as a function of $\rho$.\newline Then, equation
($\ref{Euler}$)$_{1}$ is transformed into%
\begin{equation}
v_{t}+\sigma{\nabla}\cdot u=-u\cdot{\nabla}v-\frac{\gamma-1}{2}v{\nabla}\cdot
u \label{ee114}%
\end{equation}
and equation ($\ref{Euler}$)$_{2}$ is transformed into%
\begin{equation}
u_{t}+\sigma{\nabla}v=-(u\cdot{\nabla})u-\frac{\gamma-1}{2}v{\nabla}v\text{.}
\label{ee115}%
\end{equation}
Multiply equation ($\ref{ee114}$) by $v$ and equation ($\ref{ee115}$) by $u$.
Then, add them together and rearrange the terms to get%
\begin{equation}
\left(  \frac{v^{2}+|u|^{2}}{2}\right)  _{t}+{\nabla}\cdot(\sigma
vu)=-vu\cdot{\nabla}v-u\cdot(u\cdot{\nabla}u)-\frac{\gamma-1}{2}v^{2}{\nabla
}\cdot u-\frac{\gamma-1}{2}vu\cdot{\nabla}v\text{,} \label{ee116}%
\end{equation}
where $u\cdot{\nabla}u:=\sum_{i=1}^{N}u_{i}{\nabla}u_{i}$ and $u=(u_{1}%
,u_{2},\cdots,u_{N})$.\newline Fix $(x,t)\in\mathbb{R}^{N}\times(0,T]$ and
$\mu\in\lbrack0,t)$. Define the truncated cone%
\begin{equation}
C_{\mu}:=\{(y,s):|y-x|\leq\sigma(t-s),0\leq s\leq\mu\}\text{.}%
\end{equation}
Note that the cross sections of $C_{\mu}$ are%
\begin{equation}
U(s):=\{y:|y-x|\leq\sigma(t-s)\}\text{ \ \ \ for }s\in\lbrack0,\mu].\newline%
\end{equation}
Lastly, define%
\begin{equation}
e(s):=\int_{U(s)}\frac{v^{2}+|u|^{2}}{2}(s,y)dy\text{.} \label{ee119}%
\end{equation}
Take the integration on both sides of equation ($\ref{ee116}$) over $C_{\mu}$
to get%
\begin{align}
&  \int_{0}^{\mu}\int_{U(s)}\left[  \left(  \frac{v^{2}+|u|^{2}}{2}\right)
_{t}+{\nabla}\cdot(\sigma vu)\right]  dyds\label{ee120}\\
&  =\int_{0}^{\mu}\int_{U(s)}\left[  -vu\cdot{\nabla}v-u\cdot(u\cdot{\nabla
}u)-\frac{\gamma-1}{2}v^{2}{\nabla}\cdot u-\frac{\gamma-1}{2}vu\cdot{\nabla
}v\right]  dyds\text{.} \label{ee121}%
\end{align}
\textbf{Step 1.} Applying the Differentiation Formula for Moving Regions, the
Fundamental Theorem of Calculus and the Divergence Theorem, expression
($\ref{ee120}$) is equal to%
\begin{align}
&  \int_{U(\mu)}\left(  \frac{v^{2}+|u|^{2}}{2}\right)  (\mu,y)dy-\int
_{U(0)}\left(  \frac{v^{2}+|u|^{2}}{2}\right)  (0,y)dy+\int_{0}^{\mu}%
\int_{\partial U(s)}\left[  \sigma\left(  \frac{v^{2}+|u|^{2}}{2}\right)
+\frac{y-x}{|y-x|}\cdot\sigma vu\right]  dSds\\
&  =e(\mu)-e(0)+\sigma\int_{0}^{\mu}\int_{\partial U(s)}\left(  \frac
{v^{2}+|u|^{2}}{2}+\frac{y-x}{|y-x|}\cdot vu\right)  dSds\\
&  \geq e(\mu)-e(0)\text{,}%
\end{align}
where $dS$ is the surface element with respect to the variable $y$ and
$\partial U(s)$ is the boundary of $U(s)$. Note that by the Cauchy Inequality,%
\begin{equation}
\frac{y-x}{|y-x|}\cdot vu\leq\left\vert \frac{y-x}{|y-x|}\cdot vu\right\vert
\leq|vu|=|v||u|\leq\frac{v^{2}+|u|^{2}}{2}\text{.}%
\end{equation}
\textbf{Step 2.} By the Cauchy Inequality and the following two inequalities,%
\begin{equation}
|u\cdot{\nabla}u|\leq|u|\sqrt{\sum_{i=1}^{N}|{\nabla}u_{i}|^{2}}\qquad\text{
and }\qquad|{\nabla}\cdot u|\leq\sqrt{\sum_{i=1}^{N}|{\nabla}u_{i}|^{2}},
\end{equation}
the integrand of $(\ref{ee121})$ can be estimated as follows:%
\begin{align}
&  -vu\cdot{\nabla}v-u\cdot(u\cdot{\nabla}u)-\frac{\gamma-1}{2}v^{2}{\nabla
}\cdot u-\frac{\gamma-1}{2}vu\cdot{\nabla}v\\
&  \leq|v||u||{\nabla}v|+|u||(u\cdot{\nabla}u)|+\frac{\gamma-1}{2}%
v^{2}|{\nabla}\cdot u|+\frac{\gamma-1}{2}|v||u||{\nabla}v|\\
&  \leq\gamma\left[  |v||u||{\nabla}v|+|u||(u\cdot{\nabla}u)|+v^{2}|{\nabla
}\cdot u|\right] \\
&  \leq\gamma\left[  \frac{v^{2}+|u|^{2}}{2}|{\nabla}v|+\frac{v^{2}+|u|^{2}%
}{2}\left(  2\sqrt{\sum_{i=1}^{N}|{\nabla}u_{i}|^{2}}\right)  \right] \\
&  =\gamma\left(  \frac{v^{2}+|u|^{2}}{2}\right)  \left(  |{\nabla v}%
|+2\sqrt{\sum_{i=1}^{N}|{\nabla}u_{i}|^{2}}\right)  .
\end{align}
Thus, expression ($\ref{ee121}$) is less than or equal to%
\[
C\int_{0}^{\mu}e(s)ds\text{,}%
\]
where%
\begin{equation}
C=\gamma\max_{C_{\mu}}\left\{  |{\nabla}v|+2\sqrt{\sum_{i=1}^{N}|{\nabla}%
u_{i}|^{2}}\right\}  <+\infty.
\end{equation}
\textbf{Step 3.} Combining the results of Step 1 and Step 2 produces%
\begin{equation}
e(\mu)-e(0)\leq C\int_{0}^{\mu}e(s)ds\text{.}%
\end{equation}
By Gronwall's Inequality and the definition ($\ref{ee119}$) of $e(s)$, we see
that%
\begin{equation}
0\leq e(\mu)\leq e(0)\exp(Ct)\text{.}%
\end{equation}
If $|x|>R+\sigma t$, $|y|>R$ for $y\in U(0)$.\newline Thus, $e(0)=0$ and
$e(\mu)=0$ for $|x|>R+\sigma t$.\newline Thus, $v(\mu,x)=u(\mu,x)=0$ for
$|x|>R+\sigma t$.\newline Thus, $(\rho,u)(\mu,x)=(\bar{\rho},0)$ for
$|x|>R+\sigma t$.\newline As $\mu\in\lbrack0,t)$ is arbitrary, the result
follows by continuity.

\end{document}